\documentclass[11pt]{article}
        \usepackage[utf8]{inputenc}
        \usepackage{amsmath}
        \usepackage{amsfonts}
        \usepackage{amssymb}
        \usepackage{amsthm}
        \usepackage{mathrsfs}
        \usepackage{lscape}
        \usepackage{euscript}
        \usepackage{latexsym,enumerate,color,mdwlist}
        \usepackage{tikz}
        \usepackage{a4wide}
        \usepackage{hyperref}
        \usepackage{soul}
        \usepackage{authblk}
        \usepackage{mathtools}
        
        \usepackage[shortlabels]{enumitem}
        \setlist[enumerate]{leftmargin=25pt}
        \setlist[itemize]{leftmargin=25pt}
        
        \allowdisplaybreaks
        
        \theoremstyle{plain} \newtheorem{Thm}{Theorem}[section]
        \newtheorem{prop}[Thm]{Proposition}
        \newtheorem{lem}[Thm]{Lemma}

        \theoremstyle{definition}

        
        \newtheorem*{theorem*}{Theorem}
        \theoremstyle{remark}




        



        

        \newcommand{\F}{{\mathbb F}}

        \newcommand{\comment}[1]{}

        \def\Z{\mathbb{Z}}



        \begin{document}
        \title{A new proof of R\'edei's theorem on the number of directions.}
        \date{}
\author{G\'abor Somlai \thanks{The author is a J\'anos Bolyai research fellow. The work of the author on the project leading to this application has received
funding from the European Research Council (ERC) under the European Union’s Horizon 2020
research and innovation programme (grant agreement No. 741420) \\and by  (NKFIH) Grant No. K138596 and  SNN 132625.}}
        \affil{E\"otv\"os Lor\'and University, Department of Algebra and Number Theory \\
        gabor.somlai@ttk.elte.hu}
        \maketitle
        \begin{abstract}
        Rédei and Megyesi proved that the number of directions determined by a $p$ element subset of $\F_p^2$ is either $1$ or at least $\frac{p+3}{2}$.  The same result was independently obtained by Dress, Klin and Muzychuk. We give a new and short proof of this result using a Lemma proved by Kiss and the author. The new proof further on a result on polynomials over finite fields. 
        \end{abstract}
        
        \section{Introduction}
        Let $p$ be a prime.
        The points of the projective line $PG(1,\F_p)$ can be considered as equivalence classes of the non-zero vectors of the affine plane $AG(2,\F_p)$, where $\F_p$ denotes the field of $p$ elements.  Two elements are equivalent if one of them is a non-zero multiple of the other. For a subset $H$, the \emph{set of directions} $D(H) \subset PG(1,\F_p)$ is the equivalence classes corresponding  to the elements of $(H-H)\setminus \{ 0\}$. The number of directions determined by is $H$ is the cardinality of $D(H)$.
        
        Rédei investigated the number of directions determined by a $p$-element subset of the 2 dimensional space $\F_p^2$ over the finite field $\F_p$ of $p$ elements. Using his results on lacunary polynomials, Rédei proved that such a subset is a line or determine at least $\frac{p+1}{2}$ directions. Later, Megyesi excluded the case of $\frac{p+1}{2}$ directions. These results can be summarized as follows, see \cite{redei}. 
        \begin{Thm}\label{thm:redei}
            If a set of $p$ points in $\F_p^2$ is not a line, then
it determines at least $\frac{p+3}{2}$ directions. 
        \end{Thm}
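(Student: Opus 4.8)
The plan is to pass to the graph of a function and study its Rédei polynomial. Suppose, for contradiction, that $H$ is not a line but determines $N := |D(H)| \le \tfrac{p+1}{2}$ directions. Then $N < p$, so some direction is not determined, and after an invertible affine change of coordinates we may assume the vertical direction is not in $D(H)$. Hence the $p$ points of $H$ have pairwise distinct first coordinates, so $H = \{(a,f(a)) : a \in \F_p\}$ for some $f\colon \F_p \to \F_p$, and the determined directions are exactly the slopes $\frac{f(b)-f(a)}{b-a}$ with $a \ne b$; equivalently, a slope $m$ is \emph{not} determined precisely when $g_m(x) := f(x) - mx$ is a permutation of $\F_p$. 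Since $H$ is not a line, $f$ is not affine.

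Next I would introduce the Rédei polynomial $R(X,Y) := \prod_{a \in \F_p}\bigl(X - (f(a)-aY)\bigr)$, which is monic of degree $p$ in $X$ and, for every non-direction $m$, satisfies $R(X,m) = \prod_{c \in \F_p}(X-c) = X^p-X$, since there $\{f(a)-am : a \in \F_p\} = \F_p$. Writing $R(X,Y) = \sum_{i=0}^p \sigma_i(Y)X^{p-i}$ with $\deg_Y \sigma_i \le i$, each $\sigma_i$ with $1 \le i \le p-2$ vanishes at the $p-N$ non-directions (where it agrees with the zero coefficient of $X^{p-i}$ in $X^p-X$), hence $\sigma_i \equiv 0$ whenever $i < p-N$. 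Consequently $R(X,Y) = X^p + \sum_{i \ge p-N}\sigma_i(Y)X^{p-i}$, so for \emph{every} $m$ we may write $R(X,m) = X^p + h_m(X)$ with $\deg h_m \le N$. This is where the lemma of Kiss and the author is to be used: it supplies the combinatorial control of the intersection pattern of $H$ with a pencil of parallel lines — equivalently of the value-multiplicities of the maps $g_m$ — that is needed to conclude, for a determined direction $m$, that the fully reducible polynomial $R(X,m)$ is a genuinely non-degenerate lacunary polynomial.

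The result on polynomials over finite fields then enters: a fully reducible polynomial of the form $X^p + h(X)$ over $\F_p$ with $h$ nonconstant and $X^p+h \ne X^p-X$ must satisfy $\deg h \ge \tfrac{p+1}{2}$. For a determined direction $m$ we have $R(X,m) = \prod_c (X-c)^{n_c}$ with $n_c := |g_m^{-1}(c)|$; since $g_m$ is not a permutation, some $n_c \ge 2$ and some $n_{c'} = 0$, so $R(X,m) \ne X^p-X$, while $h_m$ is nonconstant because $f$ is not affine (a constant $h_m$ would force $R(X,m) = (X+c)^p$, hence $f$ affine). Therefore $\tfrac{p+1}{2} \le \deg h_m \le N \le \tfrac{p+1}{2}$, so $N = \tfrac{p+1}{2}$ and $\deg h_m = \tfrac{p+1}{2}$ for every determined direction $m$.

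It remains to derive a contradiction from $N = \tfrac{p+1}{2}$ — this is Megyesi's point. Here $\deg h_m = \tfrac{p+1}{2}$ for all determined $m$ forces the coefficient $\sigma_{(p-1)/2}(Y)$ to be a nonzero polynomial of degree at most $\tfrac{p-1}{2}$ vanishing at all $\tfrac{p-1}{2}$ non-directions, hence vanishing at exactly those points. Feeding this, the sharp boundary-degree form of the polynomial result (which pins down the shape of $h_m$), and the lemma of Kiss and the author applied to the value-multiplicities $n_c$, one aims to reach an incompatibility — morally, the set of non-directions would have to be stable under an operation on $\F_p$ that a set of that size cannot support over a prime field. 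I expect this last step, together with the chore of matching the hypotheses of the two imported results to the present situation, to be the main obstacle; everything before it is routine manipulation of the Rédei polynomial.
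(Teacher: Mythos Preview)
Your outline is essentially the \emph{classical} R\'edei proof via lacunary polynomials, not the argument of this paper, and you have misidentified both of the imported results. The ``result on polynomials over finite fields'' in the paper is not the lacunary theorem about fully reducible polynomials $X^p+h(X)$; it is Theorem~1.2, which says that if the values of a polynomial $g\colon\F_p\to\F_p$, read as integers in $\{0,\dots,p-1\}$, sum to exactly $p$, then $g$ is constant or $\deg g\ge\frac{p-1}{2}$. Likewise the Kiss--Somlai lemma is not a statement about value-multiplicities inside a pencil; it says that if $A\subset\F_p^2$ has $kp$ points and $r(i)$ counts points of $A$ on the $i$-th vertical line, then the number of directions is at least $\deg r+2$.

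The paper's proof is then two lines: for a $p$-point set $H$ that is not a vertical line, the projection function $r$ satisfies $\sum_i r(i)=p$ in $\Z$; if $H$ is not a line then $r$ is non-constant, so Theorem~1.2 gives $\deg r\ge\frac{p-1}{2}$, and the Kiss--Somlai lemma gives at least $\frac{p-1}{2}+2=\frac{p+3}{2}$ directions. Note that this already yields $\frac{p+3}{2}$, so the Megyesi exclusion of $\frac{p+1}{2}$ comes for free --- precisely the step you flagged as the main obstacle and left undone.

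Your R\'edei-polynomial computation up through $\deg h_m\le N$ is correct and is indeed the classical route, but to finish it you would need R\'edei's lacunary theorem (not provided here) and then a separate argument to rule out $N=\frac{p+1}{2}$, which you only sketch impressionistically. So as written the proposal is both a different approach and incomplete; the paper's method trades the R\'edei polynomial for the projection polynomial and replaces the lacunary machinery by the short parity/squaring trick in the proof of Theorem~1.2.
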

Sets having exactly $\frac{p+3}{2}$ were described by Lovász and Schrijver \cite{lovaszschrijver}. As a generalization of Rédei's result, Szőnyi \cite{szonyi} proved that if $k \le p$, then a $k$-element subset not lying in a line determines at least $\frac{k+3}{2}$ directions. Gács \cite{gacs} showed that there is another gap in the possible number of directions between $\frac{p+3}{2}$ and $\lfloor 2 \frac{p-1}{3}\rfloor+1$.

Note that the result of Megyesi and Rédei was independently obtained by Dress, Klin and Muzychuk \cite{dressklinmuzychuk}. They used this result to give a new proof of Burnside's theorem on permutation groups of prime degree. Another application of the results on the number of directions in group theory is due to Dona \cite{dona}, who used his result to add to the theory of growth in groups. Further, the connection of the set of directions in the affine plane to blocking sets on finite projective plans are also discussed in \cite{gacs}.

One of the main purposes of this paper is to give a new proof of Theorem \ref{thm:redei}. The other one is to prove Theorem \ref{thm:main}, which will immediately imply Theorem \ref{thm:redei}. 

Let $g \colon \F_p \to \F_p$ be a polynomial. Considering the polynomial function corresponding to $g$ we may assume that the degree of 
    $g$ is at most $p-1$. Further the elements of $\F_p$ can be considered as elements in $\{0,1, \ldots, p-1 \} \subset \Z$. Thus we may consider the sum of the values in $\mathbb{Z}$. If it is small enough, then we obtain restrictions on the degree of $g$.

    In order to motivate the following theorem, it is useful to consider the polynomial $q(x)=x^{\frac{p-1}{2}}+1$. The sum of the values of  $q$ is equal to $p$ since
\begin{equation}
    q(x)=
    \begin{cases}
       2 & \mbox{if } $x$ \mbox{ is a quadratic residue}\\
       1 & \mbox{if } $x=0$ \\ 
       0 &\mbox{otherwise.}
    \end{cases}
\end{equation}
This simple example shows that the following theorem is sharp. 
    \begin{Thm}\label{thm:main}
    Let $p$ be an odd prime. If $\sum_{i=1}^{p-1} g(i) =p $, then the degree of either $g$ is at least $\frac{p-1}{2}$ or $g$ is a constant function.
    \end{Thm}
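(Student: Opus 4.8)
The plan is to work with the power-sum basis and exploit the classical fact that $\sum_{i=1}^{p-1} i^k \bmod p$ is $-1$ when $(p-1)\mid k$ (and $k>0$) and $0$ otherwise, while $\sum_{i=1}^{p-1} i^0 = p-1$. Write $g(x) = \sum_{k=0}^{p-1} a_k x^k$ with $a_k \in \F_p$ viewed as integers in $\{0,1,\dots,p-1\}$. The hypothesis $\sum_{i=1}^{p-1} g(i) = p$ is an identity in $\Z$, so in particular it holds mod $p$: reducing, $\sum_{k} a_k \big(\sum_{i=1}^{p-1} i^k\big) \equiv 0 \pmod p$, which gives $-a_0 - a_{p-1} \equiv 0 \pmod p$, i.e. $a_0 + a_{p-1} \equiv 0$. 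So the only coefficients constrained mod $p$ by the value-sum are those of $x^0$ and $x^{p-1}$.

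The main point is that the value-sum being as small as $p$ forces $g$ to take small values, and this is where I would bring in the Lemma of Kiss and the author (referred to in the abstract). I expect that Lemma to say something of the form: if the multiset of values of a polynomial over $\F_p$, lifted to $\{0,1,\dots,p-1\}$, has sum $\le$ some bound, then either the polynomial is constant or its degree is at least $\frac{p-1}{2}$ — or, more plausibly, a statement about the structure of the value distribution (e.g. that few nonzero values, or values concentrated on a coset structure, force high degree via a Rédei-type lacunary-polynomial argument). So the strategy is: first reduce to the case where $g$ is non-constant and $\deg g \le p-2$, and derive a contradiction. Suppose $d = \deg g \le \frac{p-3}{2}$. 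Since $\sum_{i=1}^{p-1} g(i) = p$ and each summand is a nonnegative integer $\le p-1$, the values $g(i)$ are "mostly zero": at most one of them can be as large as $p-1$, at least $p-1 - p = -1$... more carefully, the number of $i$ with $g(i) \neq 0$ is at most $p$ counted with the values, so in particular $g$ has at most $p$ "units of value" to distribute; combined with $g$ having at most $d$ roots, we get that $g$ vanishes at $\ge p-1-d \ge \frac{p-1}{2}$ points, yet the total value over the remaining $\le d$ points is exactly $p$.

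From here the contradiction should come by playing the mod-$p$ identity $a_0 + a_{p-1} = 0$ against the degree bound: if $\deg g = d \le p-2$ then $a_{p-1} = 0$, so $a_0 \equiv 0 \pmod p$, hence $a_0 = 0$ as an integer in $\{0,\dots,p-1\}$, so $g(0) = 0$; this alone is not a contradiction, so the real work is to show that a low-degree $g$ that vanishes at $\ge \frac{p+1}{2}$ points and has integer value-sum exactly $p$ cannot exist. The cleanest route: consider the polynomial $h(x) = g(x) - $ (an explicit combination designed to kill the large value), or apply the Kiss–Somlai Lemma directly to the set $S = \{i : g(i) \neq 0\}$, whose size is $\le d < \frac{p-1}{2}$, to conclude $g$ must be constant on a large structured set forcing $\deg g \ge \frac{p-1}{2}$, contradicting $d \le \frac{p-3}{2}$. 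The main obstacle I anticipate is pinning down exactly which combinatorial consequence of "small value-sum" feeds into that Lemma — i.e. translating $\sum g(i) = p$ into the precise hypothesis the Lemma needs (number of nonzero values, or a partition/character condition), and handling the boundary arithmetic (the "$+3$" versus "$+1$" type bookkeeping, and the role of the single possibly-large value) without an off-by-one error. Once the Lemma applies, the degree lower bound $\frac{p-1}{2}$ should drop out immediately, matching the sharpness example $x^{(p-1)/2}+1$.
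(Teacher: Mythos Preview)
Your proposal has a genuine gap: you have misidentified the role of the Kiss--Somlai lemma. That lemma (Lemma~\ref{lem:kissomlai} in the paper) says that a $kp$-point set in $\F_p^2$ determining $d$ directions has projection polynomial of degree at most $d-2$; the paper uses it \emph{after} Theorem~\ref{thm:main} is proved, to deduce R\'edei's direction bound from the degree bound. It is not a statement about value distributions of polynomials and cannot be fed the set $\{i : g(i)\neq 0\}$ to produce a degree lower bound in the way you suggest. So the place where you write ``apply the Kiss--Somlai Lemma directly'' is exactly where the argument collapses. Separately, your zero-counting is tangled: from ``$g$ has at most $d$ roots'' you cannot conclude ``$g$ vanishes at $\ge p-1-d$ points''; the inequality points the wrong way, and the value-sum condition $\sum g(i)=p$ by itself does not force many zeros (all values could be~$1$ with a single~$2$, giving no zeros at all when $d\ge 1$). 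Your observation $a_0+a_{p-1}\equiv 0\pmod p$ is correct but, as you note, does not by itself yield a contradiction.

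The paper's actual argument is short and quite different. After shifting so that $g(0)=0$, set $f(x)=g(x^2)$ (reduced to degree $\le p-1$). The key fact is that $\deg h<p-1$ iff $\sum_{y\in\F_p}h(y)\equiv 0\pmod p$; so it suffices to show $\sum_{y}f(y)\not\equiv 0\pmod p$, since $\deg f=p-1$ forces $\deg g\ge\tfrac{p-1}{2}$. But
\[
\sum_{y\in\F_p} f(y)=g(0)+2\sum_{x\in(\F_p^*)^2} g(x)=2\sum_{x\in(\F_p^*)^2} g(x),
\]
which is an even integer between $0$ and $2p$. It cannot equal $p$ (odd); if it equals $0$ then $g$ vanishes on all nonzero squares, and if it equals $2p$ then $g$ vanishes on all nonsquares---either way $g$ has at least $\tfrac{p-1}{2}$ roots and hence degree $\ge\tfrac{p-1}{2}$. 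In the remaining cases the sum is not divisible by $p$, so $\deg f=p-1$ and we are done. The quadratic substitution is the idea your proposal is missing.
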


\section{Technique}
The proof of Theorem \ref{thm:main} relies on the following result
proved in \cite{elsoiranyoscikk}. Note that the proof of this lemma uses Rédei's polnyomials. 
\begin{lem}\label{lem:kissomlai}
        Let $A$ be a subset of $\F_p^2$ of cardinality $kp$. Assume that it determines $d$ special directions. Let $r$ be a \emph{projection function} defined as follows: 
        $$r(i)=| \{ j \in \F_p \mid (i,j) \in A \} |.$$ Then $d \ge deg(r)+2 $.
        \end{lem}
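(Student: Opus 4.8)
The plan is to run the R\'edei polynomial machinery and to read off the quantity $\deg(r)$ from the power sums of the $x$-coordinates of $A$. I will use the description that a direction is \emph{non-special} exactly when it is \emph{uniformly met}, meaning that each of the $p$ parallel lines in that direction contains precisely $k$ points of $A$ (for $k=1$ this is the same as being undetermined, so ``special'' agrees with ``determined''). Since there are $p+1$ directions in all, it is enough to prove that at most $p-1-\deg(r)$ of them are uniformly met; the number $d$ of special directions is then at least $(p+1)-(p-1-\deg(r))=\deg(r)+2$. To detect uniformly met directions I form the R\'edei polynomial
\[
R(x,y)=\prod_{(a,b)\in A}\bigl(x+ay-b\bigr),
\]
which is monic of degree $kp$ in $x$. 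For a finite slope $m$ the roots of $R(x,m)$ in $x$ are the multiset $\{\,b-am:(a,b)\in A\,\}$, and $m$ is uniformly met if and only if this multiset is $k$ copies of each element of $\F_p$, i.e. $R(x,m)=(x^p-x)^k$.

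Next I translate this factorization into the vanishing of power sums. If $m$ is uniformly met then for every $t$ with $1\le t\le p-2$,
\[
\Psi_t(m):=\sum_{(a,b)\in A}(b-am)^t=k\sum_{c\in\F_p}c^{\,t}=0,
\]
because $\sum_{c\in\F_p}c^{\,t}=0$ in that range. As a polynomial in $m$, $\Psi_t$ has degree at most $t$, and its coefficient of $m^t$ is $(-1)^t\sum_{(a,b)\in A}a^{\,t}=(-1)^t\sum_{i\in\F_p}i^{\,t}\,r(i)$, since the column $i$ contributes its $x$-coordinate with multiplicity $r(i)$. Hence every uniformly met finite direction is a common root of $\Psi_1,\dots,\Psi_{p-2}$.

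The crux is to find the first $\Psi_t$ that does not vanish identically and to pin down its degree through $\deg(r)$. Writing $r=\sum_s\rho_s x^{\,s}$ with $\rho_{\deg(r)}\ne0$ and using the standard relations $\sum_{i\in\F_p}i^{\,u}=-1$ when $(p-1)\mid u$ and $u\ge1$, and $0$ otherwise, one gets $\sum_i i^{\,t}r(i)=0$ for $1\le t\le p-2-\deg(r)$ and $\sum_i i^{\,t}r(i)=-\rho_{\deg(r)}\ne0$ at $t^{*}:=p-1-\deg(r)$. Therefore $\Psi_{t^{*}}$ is a \emph{nonzero} polynomial of degree exactly $t^{*}$ vanishing at every uniformly met finite direction, so there are at most $t^{*}=p-1-\deg(r)$ of them. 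When $\deg(r)\ge1$ the vertical direction is itself special (its columns do not all carry $k$ points), so all uniformly met directions are finite and their number is bounded by $t^{*}$; this gives $d\ge(p+1)-t^{*}=\deg(r)+2$.

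The main obstacle is exactly the degree computation of the third paragraph: one must check that $t^{*}$ lies in the admissible range $\{1,\dots,p-2\}$, which is precisely the condition $\deg(r)\ge1$, and that extracting the leading coefficient of $r$ from these power sums is legitimate (the bridge to the R\'edei polynomial being Newton's identities, which relate the power sums $\Psi_t$ to the coefficients of $R(x,y)$ and are invertible because $1,\dots,p-2$ are nonzero in $\F_p$). The degenerate case $\deg(r)=0$, where $r$ is constant, must be handled separately; there the bound reads $d\ge2$ and is checked directly, the only configurations failing it being single lines, which the ambient hypotheses exclude. Finally, the example $q(x)=x^{(p-1)/2}+1$ from the introduction shows the estimate is sharp: the associated configuration is uniformly met in exactly $(p-1)/2=p-1-\deg(r)$ directions.
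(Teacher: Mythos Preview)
The paper does not actually prove this lemma: it is quoted from \cite{elsoiranyoscikk} with only the remark that ``the proof of this lemma uses R\'edei's polynomials.'' Your reconstruction via the R\'edei polynomial and the power sums $\Psi_t(m)=\sum_{(a,b)\in A}(b-am)^t$ is therefore precisely in the spirit of what the paper indicates, and for $\deg(r)\ge 1$ it is correct. The key identity $\sum_{i}i^{t}r(i)=-\rho_{p-1-t}$ holds because $1\le t+s\le 2p-4$ forces $t+s=p-1$ as the only multiple of $p-1$ in range (note that $\deg(r)\le p-2$ automatically, since $\sum_i r(i)\equiv |A|=kp\equiv 0$ gives $\rho_{p-1}=0$). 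Hence $\Psi_{t^{*}}$ with $t^{*}=p-1-\deg(r)\in\{1,\dots,p-2\}$ is genuinely of degree $t^{*}$ and vanishes at every uniformly met finite slope, while the vertical direction is special because $r$ is non-constant; this yields $d\ge(p+1)-t^{*}=\deg(r)+2$. The parenthetical about Newton's identities is unnecessary---your argument never uses them---but it does no harm.

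Your treatment of the degenerate case $\deg(r)=0$ is not right, though. You claim that only single lines fail $d\ge 2$, but in fact for any $1\le k\le p-1$ the union of $k$ parallel (say horizontal) lines has $r\equiv k$, a nonzero constant, and exactly one special direction (the horizontal one), so $d=1<2$. Thus the inequality, read literally with $\deg(r)=0$, is false in these examples; the lemma as stated in \cite{elsoiranyoscikk} presumably carries an implicit non-constancy hypothesis on $r$, or some convention making the bound vacuous in that case. This does not affect the paper's application, since Theorem~\ref{thm:main} is only invoked after excluding the line case, which forces $r$ non-constant and puts you squarely in the range $\deg(r)\ge 1$ where your argument is complete.
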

Using suitable affine transformation we may prove the previous lemma for any projection function obtained in this way instead of the vertical projection.

    As a corollary of this lemma we obtain that in order to prove Rédei's result it is sufficient to prove the Theorem \ref{thm:main} that is of independent interest.

    Using a simple argument we get a weaker result than Theorem \ref{thm:main}. 
\begin{prop}\label{prop}
        Let $p$ be an odd prime. If $\sum_{i=1}^{p-1} g(i) =p $, then the degree of either $g$ is at least $\frac{p-1}{3}$ or $g$ is a constant function.
\end{prop}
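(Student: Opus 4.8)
The plan is a short counting argument on the fibres of $g$. We may assume $g$ is non-constant, since otherwise the conclusion is immediate (and in fact summing the hypothesis over $\F_p$ then forces $g\equiv 1$, so the second alternative in the statement is not vacuous). Write $d=\deg g$ and, following the convention in the statement, identify each value $g(i)\in\F_p$ with the integer it represents in $\{0,1,\dots,p-1\}$; for $v\in\{0,1,\dots,p-1\}$ put $m_v=\lvert\{\,i\in\F_p:g(i)=v\,\}\rvert$. I will use just two facts. First, counting points, $\sum_{v=0}^{p-1}m_v=p$ (the sum in the hypothesis runs over all of $\F_p$), while the hypothesis $\sum_i g(i)=p$ reads $\sum_{v=0}^{p-1}v\,m_v=p$. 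Second, for every $v$ the polynomial $g-v$ is non-constant of degree $d$, hence has at most $d$ roots in $\F_p$, so $m_v\le d$; I shall only need this for $v=0$ and $v=1$.

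The key step is to extract a bound on $d$ from $m_0$ and $m_1$ alone. Let $R=\sum_{v\ge 2}m_v$ count the $i$ with $g(i)\ge 2$; then $m_0+m_1+R=p$, and subtracting $m_1$ from the weighted identity gives $\sum_{v\ge 2}v\,m_v=p-m_1$. Since every index in that sum satisfies $v\ge 2$,
\[
p-m_1=\sum_{v\ge 2}v\,m_v\ \ge\ 2\sum_{v\ge 2}m_v=2R=2\bigl(p-m_0-m_1\bigr),
\]
and rearranging yields $2m_0+m_1\ge p$. Together with $m_0\le d$ and $m_1\le d$ this gives $3d\ge 2m_0+m_1\ge p$, hence $d\ge p/3>\tfrac{p-1}{3}$, which is the claim.

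I do not expect a genuine obstacle: the only idea beyond bookkeeping is the crude bound $v\ge 2$ on the "large" fibres, which converts the two linear identities into $2m_0+m_1\ge p$; everything else is the elementary fact that a polynomial of degree $d$ has at most $d$ roots. The point requiring mild care is to keep the integer sum $\sum_i g(i)$ (what the hypothesis controls) apart from arithmetic in $\F_p$ (what controls the $m_v$); these cohabit cleanly because the fibres $\{g=v\}$ are defined over $\F_p$ whereas the weights $v$ are honest integers. This argument is visibly wasteful — it discards all information about the fibres over $v\ge 2$ — which is precisely why it stops at $p/3$ rather than the sharp $\tfrac{p-1}{2}$ of Theorem~\ref{thm:main}; regaining that factor is exactly what the polynomial input behind Theorem~\ref{thm:main} accomplishes.
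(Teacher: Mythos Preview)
Your argument is correct and is essentially the same fibre-counting as in the paper: both reduce to the inequality $2m_0+m_1\ge p$ obtained by bounding $\sum g(x)\ge (p-m_0)+(p-m_0-m_1)$, and then invoke $m_0,m_1\le d$. Your write-up is a bit more explicit about the bookkeeping, but the approach and the key step coincide.
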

\begin{proof}
We will simply prove that one of its values of $g$ is taken at least $\frac{p-1}{3}$ times. More precisely $|\{ x \in \F_p \mid g(x)=0\}|\ge \frac{p-1}{3}$ or $|\{ x \in \F_p \mid g(x)=1\}|\ge \frac{p-1}{3}$.   

Assume indirectly this is not the case. 
Then 
\begin{equation*}
        \sum_{x \in F_p} g(x) \ge 
    \sum_{x \in F_p \colon g(x) \ge 1} 1 + \sum_{x \in F_p \colon g(x) \ge 2} 1  \ge (p-\frac{p-1}{3}) + (p- 2 \frac{p-1}{3})=p+1,
\end{equation*}
a contradiction.
\end{proof}
In order to emphasize the usefulness of Lemma \ref{lem:kissomlai} we prove the following simple result.  The proof uses again the observation that the multiplicity of any element in the range of a non-constant polynomial is a lower bound for the degree of a non-zero polynomial.  
\begin{Thm}
    Let $H$ be a subset of $\F_p^2$ of cardinality $p$. Let $a$ and $b$ be the size of the projection of $H$ to the $x$ and $y$ axis, respectively. Then the number of directions determined by $H$ is at least $p-\min \{ a,b\}+2$. 
\end{Thm}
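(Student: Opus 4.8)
The plan is to apply Lemma~\ref{lem:kissomlai} to $H$ itself (the case $k=1$ of that lemma), once we have bounded the degree of the vertical projection polynomial from below by its number of zeros.

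First I would normalise the problem. Applying the linear coordinate swap $(x,y)\mapsto(y,x)$ — which fixes $|H|$, interchanges $a$ and $b$, and permutes $PG(1,\F_p)$ so that the number of directions is preserved — we may assume that $a=\min\{a,b\}$; we may also assume that $H$ is not a line. Set
$r(i)=|\{\,j\in\F_p:(i,j)\in H\,\}|$ and regard $r$ as a function $\F_p\to\F_p$, i.e.\ as a polynomial of degree at most $p-1$. By the definition of $a$, exactly $p-a$ of the values $r(i)$ are $0$, and $\sum_{i\in\F_p}r(i)=|H|=p$. Now I would check that $r$ is not the zero polynomial of $\F_p[x]$: since $0\le r(i)\le p$, if all $r(i)$ were $\equiv 0\pmod p$ then each would equal $0$ or $p$, so one column would contain all $p$ points of $H$ and $H$ would be a vertical line, contrary to assumption. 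In particular $r(i)<p$ for every $i$, so the $p-a$ indices at which $r$ vanishes are genuine roots of the nonzero polynomial $r$ over $\F_p$; hence $\deg(r)\ge p-a$.

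Finally, Lemma~\ref{lem:kissomlai} with $A=H$ and $k=1$ gives that $H$ determines at least $\deg(r)+2\ge p-a+2$ special directions, and therefore at least $p-a+2=p-\min\{a,b\}+2$ directions, as claimed. I expect the only real obstacle to be the middle step: verifying $r\not\equiv 0$ — equivalently, that we have genuinely reduced to the case in which $H$ is not an axis-parallel line — since everything else is just the quoted lemma together with the elementary fact that a nonzero polynomial over $\F_p$ has at most $\deg$ roots.
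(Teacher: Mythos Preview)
Your argument is correct and follows essentially the same route as the paper: bound $\deg r$ from below by the number of empty columns, then invoke Lemma~\ref{lem:kissomlai}. The paper applies the lemma to both the vertical and the horizontal projection polynomials (yielding the bounds $p-a+2$ and $p-b+2$) rather than first swapping coordinates, but this is only a cosmetic difference; your normalisation and your explicit verification that $r$ is not the zero polynomial over $\F_p$ are details the paper leaves implicit.
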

\begin{proof}
Let $r$ be the function from $\F_p$ to $\F_p$ be defined as in Lemma \ref{lem:kissomlai}. Then the multiplicity of $0$ as a root of $r$ is at least $p-a$ and for the projection polynomial defined by the projection of $H$ to the second coordinate gives a polynomial of degree at least $p-b$. Now, Lemma \ref{lem:kissomlai} gives  the result.   
\end{proof}
The importance of this trivial corollary of Lemma \ref{lem:kissomlai} relies on the similarity of this result to the one of Di Benedetto, Solymosi and White \cite{cartesianproduct1}, who proved that the number of directions determined by a subset of $\F_p^2$, which is the Cartesian product of the subsets $A, B \subset \F_p$ is at least 
$$ |A| \cdot |B|- \min\{|A|,|B| \} +2.
$$

\section{Proof of the main result}

    \begin{proof}
        As we have mentioned, every polynomial function from $\F_p$ to $\F_p$ coincides with a unique polynomial of degree at most $p-1$ so we will automatically reduce the degree below $p$.
        
       The proof of Theorem \ref{thm:main} relies on the following simple observation. 
The degree of a polynomial $h$ is smaller than $p-1$ if and only if 
    \[
  \sum_{y \in \F_p} h(y) \equiv 0 \pmod{p}. 
    \]    

Let us consider the polynomial $f(x)=g(x^2)$ (reduced to degree at most $p-1$).
Clearly, if  $\sum_{y \in \F_p} f(y) \not\equiv 0 \pmod{p}$, then $deg(g) \ge \frac{p-1}{2}$.

We argue that if there were a non-constant polynomial of degree less than $\frac{p-1}{2}$ such that the sum of its values is $p$, then there is one which takes 0 at 0. 
It is clear that if the polynomial is non-constant, then the sum can only be p if $0$ is in the range of the polynomial. Now applying a linear substitution $x \to x+i$ on the $x$ variable of the polynomial we obtain a polynomial of the same degree satisfying $f(0)=0$.

Let us first estimate the sum of the values of $f$ from above. 
\begin{equation}\label{eq1}
\begin{split}
    \sum_{y \in \F_p} f(y)&=\sum_{x \in \F_p} g(x^2)=g(0) + 2 \sum_{x \in (\F_p^*)^2} g(x)  \\
    &=  2 \sum_{x \in (\F_p^*)^2} g(x) \le 2  \sum_{x \in \F_p} g(x)=2p.
    \end{split}  
\end{equation}
It is clear that $\sum_{y \in \F_p} f(y)$ cannot be equal to $p$ since it is an even number by equation \eqref{eq1}. 

On the other hand, equality in \eqref{eq1} can only hold if $g$ vanishes on every the non-quadratic residues, when the degree of $g$ is at least $\frac{p-1}{2}+1=\frac{p+1}{2}$. 

It could be that the previous sum is 0 but then $g$ vanishes on the quadratic residues, again having many roots. 

Therefore we obtain that the sum in equation \eqref{eq1} is not divisible by $p$, finishing the proof of Theorem \ref{thm:main}. 
\end{proof}
Theorem \ref{thm:redei} can be applied for the projection function defined in Lemma \ref{lem:kissomlai}. We may assume that the set is not a vertical line. 
It follows from Theorem \ref{thm:main} that the degree of $r$ is at least $\frac{p-1}{2}$. Then by Lemma \ref{lem:kissomlai}, the number of direction determined by $A$ is at least $\frac{p+3}{2}=\frac{p-1}{2}+2$. 

There are natural problems arising here. Can we find a similar result proving the ones listed in the beginning of this paper. 
\begin{itemize}
    \item 
    It is true that up to affine transformations $x^{\frac{p-1}{2}}+1$ is the unique polynomial of degree $\frac{p-1}{2}$ such that the sum of its values is $p$? 
\item Is it possible to prove Gács's result on the number of directions?
\end{itemize}
\section*{Acknowledgement}
The author is grateful for Gergely Kiss and Zolt\'an Nagy for short but fruitful conversations.

        \end{document}